\newtheorem{thm}{Theorem}
\newtheorem{lem}[thm]{Lemma}
\newtheorem{prop}[thm]{Proposition}
\newtheorem{cor}[thm]{Corollary}
\theoremstyle{definition}
\crefname{ex}{example}{examples}
\DeclareMathOperator{\map}{Map}
\DeclareMathOperator{\stab}{Stab}
\DeclareMathOperator{\out}{Out}
\DeclareMathOperator{\diff}{Diff}
\DeclareMathOperator{\homeo}{Homeo}
\newcommand{\R}{\mathbb{R}}
\newcommand{\Z}{\mathbb{Z}}
\newcommand{\sym}{\operatorname{Sym}(\mathbb{N})}
\begin{document}
\title{Nielsen Realization for Infinite-Type Surfaces}

\author{Santana Afton}
\address{School of Mathematics\\
	Georgia Institute of Technology\\
	Atlanta, Georgia, USA}
\email[S.~Afton]{santana.afton@gatech.edu}

\author{Danny Calegari}
\address{Department of Mathematics\\
	University of Chicago\\ 
	Chicago, Illinois, USA}
\email[D.~Calegari]{dannyc@math.uchicago.edu}

\author{Lvzhou Chen}
\address{Department of Mathematics\\ 
	University of Chicago\\ 
	Chicago, Illinois, USA}
\email[L.~Chen]{lzchen@math.uchicago.edu}

\author{Rylee Alanza Lyman}
\address{Department of Mathematics\\ 
	Rutgers University at Newark\\ 
	Newark, New Jersey, USA}
\email[R.~Lyman]{rylee.lyman@rutgers.edu}

\begin{abstract}
    Given a finite subgroup $G$ of the mapping class group
    of a surface $S$, 
    the Nielsen realization problem
    asks whether $G$
    can be realized as a finite group of homeomorphisms of $S$.
	In 1983, Kerckhoff showed that for $S$ a finite-type surface,
	any finite subgroup $G$ 
	may be realized as a group of isometries of some hyperbolic metric on $S$.
    We extend Kerckhoff's result to
    orientable, infinite-type surfaces. 
    As applications, 
	we classify torsion elements in the mapping class group of the plane minus a Cantor set, 
	and also show that topological groups
    containing sequences of torsion elements limiting to the identity 
	do not embed continuously into the mapping class group of $S$.
	Finally, we show that compact subgroups of the mapping class group of $S$ are finite,
	and locally compact subgroups are discrete.
\end{abstract}

\maketitle

In 1932, Nielsen asked whether finite subgroups of
mapping class groups act on surfaces.
In 1983, Kerckhoff~\cite{Kerckhoff} gave the following
strong affirmative answer.

\begin{thm}[Kerckhoff, Theorem 5~\cite{Kerckhoff}]\label{Kerckhoffthm}
	Let $S$ be a finite-type surface
	with negative Euler characteristic. 
	Any finite subgroup of the mapping class group of $S$
	may be realized
	as a group of isometries of some hyperbolic metric on $S$.
\end{thm}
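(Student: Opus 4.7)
The plan is to make $G$ act on the Teichmüller space $\mathcal{T}(S)$ and extract a fixed point; any fixed hyperbolic structure $X$ gives a map $G \to \mathrm{Isom}(X)$ that lifts the given $G \hookrightarrow \mathrm{MCG}(S)$, since the homomorphism $\mathrm{Isom}(X) \to \mathrm{MCG}(S)$ induced by inclusion is injective for finite-type $S$ with $\chi(S) < 0$. So the content is entirely in producing a $G$-fixed point in $\mathcal{T}(S)$.

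To do this I would try to exhibit a proper, $G$-invariant, strictly convex function on $\mathcal{T}(S)$ with respect to a $G$-equivariant convex structure. Concretely, I would pick a finite $G$-invariant filling collection $\mathcal{C} = \{\gamma_1,\ldots,\gamma_n\}$ of isotopy classes of essential simple closed curves (starting from any finite filling collection and taking its $G$-orbit), and consider
\[
F(X) \;=\; \sum_{i=1}^n \ell_X(\gamma_i),
\]
the sum of hyperbolic lengths of geodesic representatives. Since $\mathcal{C}$ fills $S$, the function $F$ is proper on $\mathcal{T}(S)$, and by construction it is $G$-invariant. If I can show $F$ has a \emph{unique} minimizer, that minimizer must be fixed by $G$ and we are done.

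The technical heart, and the principal obstacle, is proving strict convexity of $F$ along a suitable family of deformation paths that cover $\mathcal{T}(S)$. The right family is Thurston's earthquake paths: for every measured lamination $\lambda$, the earthquake flow $E_\lambda^t$ gives a real-analytic path in $\mathcal{T}(S)$, and Thurston's earthquake theorem says any two points in $\mathcal{T}(S)$ are joined by such a path. So it suffices to show that for each simple closed curve $\gamma$ and each measured lamination $\lambda$, the function $t \mapsto \ell_{E_\lambda^t X}(\gamma)$ is strictly convex (and strictly so whenever $\gamma$ is not disjoint from the support of $\lambda$). This is the core convexity lemma of Kerckhoff's paper. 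I would first establish it for $\lambda$ a weighted simple closed curve by an explicit computation in Fenchel--Nielsen coordinates, tracking how the hyperbolic length of $\gamma$ varies as one twists along $\lambda$: differentiating twice and using the hyperbolic law of cosines at each intersection point produces a manifestly positive second derivative (a sum of terms of the form $\sinh$ of half-lengths of subarcs divided by positive quantities). I would then pass to arbitrary measured laminations by approximation, using continuity of length and its derivatives along earthquake paths in the weak-$*$ topology on $\mathcal{ML}(S)$.

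With strict convexity along every earthquake path in hand, suppose $F$ had two minima $X,Y$. Join them by an earthquake path along some $\lambda$; since $\mathcal{C}$ fills $S$, some $\gamma_i$ intersects the support of $\lambda$, so $\ell(\gamma_i)$ is strictly convex along the path, and hence $F$ is strictly convex there, contradicting the assumption that both endpoints are minima. Therefore the minimizer is unique, necessarily $G$-invariant, and furnishes the required hyperbolic metric realizing $G$ as a group of isometries.
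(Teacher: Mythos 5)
This statement is Kerckhoff's theorem, which the paper quotes rather than proves, and your outline --- a finite $G$-invariant filling curve system, properness of the total length function, convexity (strict where the curve crosses the lamination) along earthquake paths, and Thurston's earthquake theorem to force a unique and hence $G$-fixed minimizer --- is exactly Kerckhoff's argument as sketched in the paper's introduction. The only thin spot is your passage from weighted simple closed curves to general measured laminations ``by approximation'': a pointwise limit of strictly convex functions is merely convex, so the strictness needed for uniqueness requires Kerckhoff's quantitative control on the monotonicity of the intersection angles (uniform lower bounds surviving the limit) rather than bare continuity of lengths and their derivatives; with that caveat the approach is the correct one.
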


Let $S$ be a surface. We distinguish two kinds of surfaces,
saying $S$ is a \emph{finite-type} surface
if its fundamental group is finitely generated,
and is an \emph{infinite-type} surface otherwise.
Recently there has been a surge of interest in infinite-type surfaces 
and their mapping class groups.
We refer the interested reader 
to a recent survey by Aramayona and Vlamis \cite{AramayonaVlamis}.

The main purpose of this paper is to extend Kerckhoff's result
to the infinite-type case.

\begin{thm}\label{maintheorem}
	Let $S$ be an orientable, infinite-type surface
	whose boundary is either empty or a union of circles.
	Any finite subgroup of the mapping class group of $S$
	may be realized 
	as a group of isometries of some hyperbolic metric on $S$.
\end{thm}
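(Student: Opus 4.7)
The natural strategy is to exhaust $S$ by $G$-invariant finite-type subsurfaces, realize $G$ on each by Kerckhoff's theorem, and glue the resulting hyperbolic metrics inductively. First I would fix homeomorphism representatives $\phi_g$ of each $g \in G$ (not necessarily forming a group) and a compact exhaustion $K_n$ of $S$. Setting $L_n = \bigcup_{g \in G} \phi_g(K_n)$ and letting $\Sigma_n$ be a finite-type subsurface of negative Euler characteristic containing $L_n \cup \Sigma_{n-1}$ produces a nested sequence with $\bigcup_n \Sigma_n = S$. After enlarging each $\Sigma_n$ if necessary, its isotopy class can be taken $G$-invariant, and up-to-isotopy invariance can be promoted to honest setwise invariance by adjusting each $\phi_g$ by an ambient isotopy so that $\phi_g(\Sigma_n) = \Sigma_n$. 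The restrictions then induce a homomorphism $G \to \mathrm{MCG}(\Sigma_n)$, injective because any kernel element would be a composition of Dehn twists about $\partial \Sigma_n$, which is torsion-free.

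Next I would build compatible hyperbolic metrics by induction on $n$. Theorem~\ref{Kerckhoffthm} applied to $\Sigma_0$ gives a $G$-equivariant hyperbolic metric $h_0$ with geodesic boundary. Assuming $h_n$ has been constructed on $\Sigma_n$, write $\Sigma_{n+1} \setminus \mathrm{int}(\Sigma_n)$ as a disjoint union of finite-type pieces $C_\alpha$, each carrying some boundary circles lying on $\partial \Sigma_n$ (with lengths dictated by $h_n$) and others on $\partial \Sigma_{n+1}$. The group $G$ permutes the $C_\alpha$, with stabilizers $G_\alpha \leq G$. For one representative of each $G$-orbit, a boundary-length-prescribed, $G_\alpha$-equivariant version of Kerckhoff gives a hyperbolic metric on $C_\alpha$ with geodesic boundary and the required boundary lengths on $\partial C_\alpha \cap \partial \Sigma_n$; transporting this metric around the orbit via the $\phi_g$ and gluing to $h_n$ along $\partial \Sigma_n$ yields $h_{n+1}$. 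The compatible sequence $\{h_n\}$ assembles into a hyperbolic metric $h$ on $S = \bigcup_n \Sigma_n$ on which $G$ acts by isometries; completeness can be arranged by controlling collar widths as $n$ grows.

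The main obstacle I anticipate is the prescribed-boundary-length refinement of Kerckhoff's theorem used in the inductive step. The original statement produces a $G_\alpha$-equivariant metric but gives no control over the lengths of $\partial C_\alpha$. To remedy this, I would pass to the quotient orbifold $C_\alpha / G_\alpha$, whose Teichm\"uller space is parameterized by Fenchel--Nielsen-type data that includes boundary lengths; any $G_\alpha$-invariant prescribed length vector corresponds to a point of this moduli space, and pulling back yields a $G_\alpha$-equivariant metric on $C_\alpha$ with the desired boundary data. Secondary technical issues---arranging that $[\Sigma_n]$ is $G$-invariant and that no complementary piece $C_\alpha$ is an annulus or disk, and verifying that the natural map of $G$ into $\mathrm{MCG}(\Sigma_n)$ is a faithful homomorphism---I expect to be routine.
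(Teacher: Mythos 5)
Your overall architecture---a $G$-invariant finite-type exhaustion, Kerckhoff applied to the complementary pieces orbit by orbit, and an inductive gluing---is the same as the paper's, and your fix for the boundary-length issue (working in the Teichm\"uller space of the quotient orbifold $C_\alpha/G_\alpha$ to hit a prescribed invariant length vector) is a workable alternative to the paper's simpler device of normalizing \emph{every} boundary geodesic to have length $1$, which makes the lengths match automatically. But there is a genuine gap at the step ``transporting this metric around the orbit via the $\phi_g$ and gluing to $h_n$ along $\partial\Sigma_n$ yields $h_{n+1}$.'' Matching boundary lengths is necessary but not sufficient for the glued metric to carry a $G$-action by isometries. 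For each boundary circle $c\subset\partial\Sigma_n$, its stabilizer $\stab(c)\le G$ acts on $c$ by rigid rotations in two a priori unrelated ways: via the isometric action on $\Sigma_n$ furnished by $h_n$, and via the isometric action on the adjacent piece $C_\alpha$ furnished by the new metric. The gluing homeomorphism between the two copies of $c$ must conjugate one circle action to the other; since it reverses the induced orientations, this forces the rotation angles of each $g\in\stab(c)$ on the two sides to be negatives of one another. You never verify this, and it is the one point in the whole proof where something beyond Kerckhoff's theorem is required.

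The paper closes exactly this gap: the rotation angle of $g\in\stab(c)$ on either side equals the rotation number of $g$ acting on the circle of geodesic rays emanating perpendicularly from $c$ into that side, which is a topological invariant of the mapping class (as for the action on the conical circle) and hence independent of which equivariant metric Kerckhoff hands you; the opposite induced orientations then supply the required sign. Without such an argument your induction can halt at the first gluing: Kerckhoff's realization on $C_\alpha$ could rotate $c$ by an angle incompatible with the rotation already imposed by $h_n$, and no choice of twist parameter repairs this, because conjugating a rotation by a rotation does not change it. Two smaller points: your injectivity argument for $G\to\map(\Sigma_n)$---that a kernel element is a product of boundary Dehn twists---does not follow, since a kernel element may be supported outside $\Sigma_n$; fortunately injectivity is not needed (the paper says as much). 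And $L_n=\bigcup_g\phi_g(K_n)$ is only $G$-invariant up to isotopy after further care, since the $\phi_g$ need not compose as a group; the paper instead takes an intersection of translates and invokes finiteness of $G$ up to isotopy.
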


Let us briefly sketch the idea of the proof.
Let $G$ be a finite subgroup of the mapping class group of $S$.
The mapping class group acts on the \emph{Teichm\"uller space} of $S$,
denoted $\mathscr{T}(S)$,
which parameterizes hyperbolic structures on $S$ up to isotopy.
Kerckhoff defines a $G$-invariant map $\ell_G\colon \mathscr{T}(S) \to \mathbb{R}_+$.
The map sends a hyperbolic structure to the sum
of the geodesic lengths of a certain finite $G$-invariant collection of 
simple closed curves on $S$.
When $S$ is of finite type,
Kerckhoff proves that the map $\ell_G$ attains a unique minimum.
This minimum is fixed by $G$, yielding an action of $G$ by isometries
of the corresponding  hyperbolic structure on $S$.

In the infinite-type setting,
all of the tools in Kerckhoff's proof are available to us,
but the sum defining $\ell_G$ diverges. 
Instead, we find an exhaustion of $S$ by connected,
$G$-invariant (homotopy classes of)
finite-type subsurfaces $S_0 \subset S_1 \subset \dotsb$.
Kerckhoff's theorem applies to each piece $\overline{S_k\setminus S_{k-1}}$,
and we show how to assemble the pieces to give a hyperbolic structure on $S$
and an action of $G$ by isometries.
It would be interesting to know if Kerckhoff's method of proof
could be applied more directly.

We remark that if $\Sigma$ is a finite-type subsurface
of $S$, the hyperbolic metric on $S$ in the theorem is
chosen so that $\Sigma$ inherits a hyperbolic metric of
finite volume. In other words, each isolated end of $S$
is given a metric modeled on the pseudosphere rather
than a flared annulus.

Let us mention recent work of Aougab, Patel and Vlamis.
In \cite{AougabPatelVlamis}, they study a similar realization problem:
given a group $G$ and a surface $S$,
is there a hyperbolic metric on $S$ whose isometry group is isomorphic to $G$?
They show that for many infinite-type surfaces, every countable group $G$ 
\emph{can} in fact be realized in this way.
Of course, note that their result does not preclude the existence of other embeddings of $G$ 
into $\map(S)$ which cannot be realized as groups of isometries.

\begin{cor}
    If $S$ is an orientable, infinite-type surface
    with nonempty compact boundary,
	the relative mapping class group fixing the boundary pointwise
    is torsion-free.
\end{cor}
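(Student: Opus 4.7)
The plan is to use the central extension
\[
1 \to K \to \map(S, \partial S) \to \map(S),
\]
where $K = \bigoplus_i \Z \cdot T_{C_i}$ is the free abelian group generated by Dehn twists about the boundary components $C_i$; in particular $K$ is torsion-free and central. Let $f \in \map(S, \partial S)$ be torsion with image $\bar f \in \map(S)$. If $\bar f = 1$, then $f$ lies in the torsion-free group $K$ and must be trivial. Otherwise $\bar f$ generates a finite cyclic subgroup of order $m \geq 2$; the main theorem realizes this subgroup by a finite-order isometry $\phi$ of some hyperbolic metric on $S$, which I choose so that each boundary circle is a geodesic.

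The next step is to show $\phi$ rotates each boundary component $C_i$ nontrivially. If instead $\phi$ fixed some $C_i$ pointwise, then at any $p \in C_i$ the differential $d\phi_p$, an orientation-preserving linear isometry of $T_p S$, would fix the nonzero tangent direction along $C_i$; being a rotation of a plane fixing a nonzero vector, it would be the identity. Since isometries of connected Riemannian manifolds are determined by their 1-jet at a single point, $\phi$ would equal the identity, contradicting $\bar f \neq 1$. Thus $\phi|_{C_i}$ is rotation by $2\pi/d_i$ for some $d_i \mid m$ with $d_i > 1$.

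Now I carry out an explicit fractional-Dehn-twist calculation. In a geodesic collar of each $C_i$, use normal coordinates so that $\phi$ acts as the literal rotation $(c, t) \mapsto (c + 2\pi/d_i, t)$. Define $\phi'$ by composing $\phi$, in each collar, with a fractional rotation $(c, t) \mapsto (c - 2\pi g(t)/d_i, t)$, where $g$ tapers from $1$ at $C_i$ to $0$ inside. Then $\phi'$ fixes $\partial S$ pointwise and represents a lift of $\bar f$ to $\map(S, \partial S)$. Because the collar rotations commute with $\phi$,
\[
(\phi')^m \;=\; \prod_i T_{C_i}^{-m/d_i}
\]
in $\map(S, \partial S)$, a nontrivial element of $K$ since each $d_i > 1$. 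Any lift of $\bar f$ to $\map(S, \partial S)$ has the form $\phi' \cdot k$ with $k = \prod_i T_{C_i}^{a_i} \in K$; since $K$ is torsion-free our $f$ has order exactly $m$, so expanding $f^m = 1$ using centrality of $K$ yields $a_i = 1/d_i$ for each $i$, impossible because each $d_i > 1$.

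The main obstacle is the identity $(\phi')^m = \prod_i T_{C_i}^{-m/d_i}$, which requires $\phi$ to act as a genuine rotation on each geodesic collar so that the fractional-rotation corrections commute with $\phi$ and exponentiate cleanly to integer powers of the boundary Dehn twists. Once this is set up, the remaining algebra in the central extension above is routine.
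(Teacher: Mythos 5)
The paper states this corollary immediately after Theorem \ref{maintheorem} and offers no proof of its own, so there is nothing to compare against except the intended (standard) derivation --- which is exactly what you have written, and your argument is correct. Two small points to tighten. First, the rotation $\phi|_{C_i}$ need not be by $2\pi/d_i$ for a divisor $d_i$ of $m$; it is by $2\pi k_i/m$ for some $k_i$ with $0<k_i<m$ (in fact $\gcd(k_i,m)=1$, since otherwise $\phi^{m/\gcd(k_i,m)}$ would fix $C_i$ pointwise and hence be the identity by your 1-jet argument, contradicting that $\phi$ has order $m$); the final equation then reads $ma_i=k_i$, which is still unsolvable in $\Z$ because $0<k_i<m$, so nothing breaks. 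Second, the facts that the kernel of $\map(S,\partial S)\to\map(S)$ is exactly the group generated by the boundary twists, that this group is free abelian (here compactness of $\partial S$ guarantees finitely many components), and that it is central, are standard for finite-type surfaces but deserve at least a citation or a sentence in the infinite-type setting; none of them fails, but you are leaning on them. You should also record explicitly why $\phi$ preserves each $C_i$ setwise (because $\bar f$ fixes the isotopy class of each boundary component and boundary geodesics in the same isotopy class coincide), which you use silently before the rotation analysis.
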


As an application, the Nielsen realization theorem allows us to classify torsion elements 
in the mapping class group of the plane minus a Cantor set; see Theorem \ref{thm: classify torsion elements}.

Equip the full homeomorphism group of $S$ with the compact-open topology,
and the mapping class group of $S$ with the quotient topology.
As another application, we have the following two theorems.

\begin{thm}
	If $G$ is a topological group containing a sequence of nontrivial
	finite order elements limiting to the identity, then
	$G$ does not embed (as a topological group) in the mapping class group of $S$.
\end{thm}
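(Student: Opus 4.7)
Suppose for contradiction that $\iota\colon G \hookrightarrow \map(S)$ is a continuous embedding of topological groups, and let $h_n \in G$ be nontrivial finite-order elements with $h_n \to e$. Then $g_n := \iota(h_n)$ are nontrivial torsion elements of $\map(S)$ converging to $1$. The quotient map $q\colon \homeo(S) \to \map(S)$ is an open homomorphism of topological groups (the kernel $\homeo_0(S)$ is normal), and since $\homeo(S)$ with the compact-open topology is metrizable, a diagonal argument using a countable neighborhood basis at $\text{id}_S$ produces representatives $\tilde g_n$ of $g_n$ with $\tilde g_n \to \text{id}_S$ uniformly on compact sets.

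Next, I would exhaust $S$ by connected finite-type subsurfaces $\Sigma_0 \subset \Sigma_1 \subset \cdots$ with $\chi(\Sigma_k) < 0$. For each fixed $k$, uniform convergence of $\tilde g_n$ to the identity on $\Sigma_k$ eventually forces $\tilde g_n(\Sigma_k)$ to be ambient-isotopic to $\Sigma_k$ by a small isotopy; absorbing that isotopy, I may assume $\tilde g_n$ preserves $\Sigma_k$ setwise with $\tilde g_n|_{\Sigma_k}$ arbitrarily close to $\text{id}_{\Sigma_k}$. Since the mapping class group of a finite-type surface with $\chi < 0$ is discrete in its quotient topology, this restriction is eventually isotopic to the identity, and hence $g_n$ acts trivially on the isotopy class $[\Sigma_k]$. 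Defining $k(n) := \max\{k : g_n \text{ acts trivially on } [\Sigma_k]\}$ yields $k(n) \to \infty$.

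For each $n$, Theorem \ref{maintheorem} applied to the finite cyclic group $\langle g_n \rangle$ furnishes a hyperbolic metric on $S$ and a finite-order isometry $\varphi_n$ realizing $g_n$. Since $g_n$ fixes $[\Sigma_{k(n)}]$, the unique representative $\Sigma'_n$ of this class with geodesic boundary is $\varphi_n$-invariant, and $\varphi_n|_{\Sigma'_n}$ is a finite-order homeomorphism of $\Sigma'_n$ representing the trivial class in $\map(\Sigma'_n)$. The finite-type case of Nielsen realization (equivalently, torsion-freeness of $\homeo_0$ of a finite-type hyperbolic surface) then forces $\varphi_n|_{\Sigma'_n} = \text{id}_{\Sigma'_n}$. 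Since an isometry of a connected hyperbolic surface that is the identity on a nonempty open set must be identity globally (by analyticity of hyperbolic isometries), $\varphi_n = \text{id}_S$ and therefore $g_n = 1$, contradicting the nontriviality of $g_n$.

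The main obstacle I anticipate is the initial lifting step---verifying carefully that sequential convergence in the quotient topology on $\map(S)$ lifts to $\homeo(S)$---together with the related claim that finite-type mapping class groups are discrete in their quotient topologies. Both points should be routine consequences of the compact-open topology on $\homeo(S)$, but they are the places where the topological (as opposed to algebraic) structure of $\map(S)$ is used most essentially, and they deserve explicit verification.
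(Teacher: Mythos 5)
Your proposal is correct in outline and rests on the same engine as the paper's proof---realize a torsion element near the identity as an isometry via \Cref{maintheorem}, show the isometry is the identity on a finite-type piece, and propagate to all of $S$ by rigidity of isometries---but your route to the key reduction is genuinely different and considerably more expensive. The paper works with the permutation topology on $\map(S)$ (which it notes agrees with the quotient topology), so that ``$g_n\to 1$'' immediately means $g_n$ eventually stabilizes any prescribed finite set of isotopy classes of curves; \Cref{prop: torsion} then shows that the stabilizer of three curves cobounding a pair of pants, an open subgroup, contains no nontrivial torsion, because an isometry preserving each of the three boundary geodesics of an embedded pair of pants setwise is already the identity there. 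Crucially, the paper only needs the element to \emph{stabilize} three curve classes, not to induce the trivial mapping class on a subsurface. You instead lift the convergent sequence to $\homeo(S)$, invoke discreteness of finite-type mapping class groups to get triviality on $\Sigma_k$, and then use injectivity of the isometry group of a finite-volume hyperbolic surface into its mapping class group. This can be made to work, but the two steps you dismiss as routine are where the debt sits: (i) passing from ``$\tilde g_n$ is $C^0$-close to the identity on $\Sigma_k$'' to ``$\tilde g_n(\Sigma_k)$ is ambiently isotopic to $\Sigma_k$ and the restriction is isotopic to the identity'' is a genuine theorem about homeomorphisms of surfaces, not an observation, and closeness on a compact set does not control $\tilde g_n^{-1}$; and (ii) the assertion that $\varphi_n|_{\Sigma'_n}$ represents the trivial class in $\map(\Sigma'_n)$ requires care, since the mapping class induced on an invariant subsurface is only well defined up to boundary Dehn twists---finite order rescues you here, as a finite-order product of boundary twists is trivial, but this should be said. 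Finally, the function $k(n)$ and the claim $k(n)\to\infty$ are superfluous: a single $n$ for which $g_n$ is trivial on a single subsurface of negative Euler characteristic already gives the contradiction. Adopting the paper's description of the neighborhood basis collapses your first two paragraphs to ``eventually $g_n$ stabilizes $[\gamma_1]$, $[\gamma_2]$, $[\gamma_3]$,'' which is all that is needed.
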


\begin{thm}
	Compact subgroups of $\map(S)$ are finite,
	and locally compact subgroups are discrete.
\end{thm}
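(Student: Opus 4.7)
I would leverage the preceding theorem (forbidding nontrivial finite-order elements from limiting to the identity in $\map(S)$) together with classical structure theorems for topological groups. For the compact claim, let $K\le\map(S)$ be compact and suppose for contradiction that $K$ is infinite. An accumulation point of $K$ yields, after left-translating, a nontrivial sequence $g_n\to e$ in $K$; the goal is to arrange these $g_n$ to be of finite order, thereby contradicting the preceding theorem. I would split on the identity component $K^0\trianglelefteq K$: if $K^0$ is nontrivial, Peter--Weyl supplies a continuous (hence open) surjection from $K^0$ onto a positive-dimensional compact connected Lie group, whose maximal torus carries torsion of every order arbitrarily close to the identity, and lifting through the surjection yields the required torsion sequence in $K^0\le K$. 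If $K^0=\{e\}$, then $K$ is profinite.

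The main obstacle is the profinite case, since an infinite profinite group may be torsion-free (for instance $\Z_p$), so ruling such a subgroup out of $\map(S)$ must use geometric rather than purely algebraic input. I would adapt the exhaustion strategy from the proof of the main theorem: for each $K$-invariant finite-type subsurface $\Sigma\subset S$, restriction yields a continuous homomorphism $\rho_\Sigma\colon K\to\map(\Sigma)$ whose image is compact in the discrete group $\map(\Sigma)$, hence finite. Thus $K$ embeds into $\varprojlim_\Sigma\rho_\Sigma(K)$, and infiniteness of $K$ forces these finite quotients to grow without bound. Coherence of the Kerckhoff realizations of the $\rho_\Sigma(K)$ across the exhaustion should allow me to select elements of $K$ supported on $S\setminus\Sigma$ (hence close to the identity in the quotient topology) that are of finite order, completing the contradiction.

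For the locally compact claim, suppose $H\le\map(S)$ is locally compact and not discrete. By Gleason--Yamabe, $H$ has an open subgroup $H'$ admitting arbitrarily small compact normal subgroups $N\trianglelefteq H'$ with $H'/N$ a Lie group; non-discreteness of $H$ transfers to $H'$ since $H'$ is open. If $H'/N$ is discrete then $N$ is open and compact in $H'$: either $N$ is infinite and thus furnishes an infinite compact subgroup of $\map(S)$, contradicting the compact case, or $N$ is finite, whence $\{e\}$ is open in $H'$ and hence in $H$, contradicting non-discreteness. If $H'/N$ is non-discrete Lie, its identity component contains either a compact torus---whose preimage in $H'$ is an infinite compact subgroup of $\map(S)$, again contradicting the compact case---or a copy of $\R$, in which case a subsidiary argument ruling out continuous one-parameter subgroups $\phi\colon\R\to\map(S)$ is needed: such a $\phi$ would force $\phi(1)=\phi(1/n)^n$ with $\phi(1/n)\to e$, so $\phi(1)$ would be infinitely divisible, and one shows this forces $\phi\equiv e$ using either torsion rigidity (when $\phi(1)$ is torsion, the image contains a copy of $S^1$, reducing to the compact case) or the absence of divisible elements in $\map(S)$ otherwise.
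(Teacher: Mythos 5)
There is a genuine gap, and it sits exactly where you flagged the ``main obstacle.'' Your reduction of the locally compact case to the compact case is workable (indeed simpler than you make it: $\map(S)$ embeds as a closed subgroup of $\sym$ and is therefore totally disconnected, so $K^0$ is automatically trivial, every compact subgroup is profinite, and van Dantzig handles the locally compact case with no need for Gleason--Yamabe or one-parameter subgroups). Incidentally, your Peter--Weyl branch as stated is also flawed: a preimage of a torsion element under a surjection of compact groups need not be torsion (the $p$-adic solenoid surjects onto $S^1$ but is torsion-free), so ``lifting through the surjection'' does not yield a torsion sequence. But that branch is moot.

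The real problem is the profinite case. You correctly observe that $K$ embeds into $\varprojlim_\Sigma \rho_\Sigma(K)$ with finite quotients growing without bound, and you correctly note that this alone cannot finish the argument because of $\Z_p$. Your proposed fix --- that ``coherence of the Kerckhoff realizations should allow me to select elements of $K$ supported on $S\setminus\Sigma$ \ldots that are of finite order'' --- is precisely the unproved assertion. A nontrivial element $g$ in the kernel of $\rho_\Sigma$ has finite-order image in every $\rho_{\Sigma'}$, but that is consistent with $g$ having infinite order in $\map(S)$ (this is exactly the $\Z_p$ phenomenon you identified). What is needed, and what the paper supplies, is a two-part argument: first, since every curve stabilizer is open and open subgroups of profinite groups have finite index, every $g\in K$ acts \emph{periodically} on the isotopy class of every essential simple closed curve; second, by the Alexander method some power $g^{k_j}$ acts as the identity on each term $S_j$ of a $g$-invariant finite-type exhaustion, and --- the crucial step --- the orders $k_j$ are \emph{uniformly bounded}, because once $S_i$ contains three curves cobounding a pair of pants, any finite-order element of $\map(S_j)$ ($j\ge i$) fixing those three classes is trivial, forcing $k_j\le k_i$. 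This uniform bound is what converts ``torsion in every finite quotient'' into ``torsion in $\map(S)$,'' and it is the ingredient your plan does not contain. Combined with the torsion-free open neighborhood of Proposition~\ref{prop: torsion} (which shows $K$ is virtually torsion-free), one concludes $K$ is finite. Without the uniform bound, your contradiction with Theorem~\ref{torsion} never materializes, because you cannot certify that the elements you select near the identity actually have finite order.
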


The authors thank Kathryn Mann
for suggesting that it would be interesting 
to find a topological proof of the former theorem
and thank Mladen Bestvina for suggesting the latter theorem.
We also thank Carolyn Abbott for helpful conversations.

In the remainder of this introduction,
let us make the statement of \Cref{maintheorem} more precise.
The \emph{mapping class group} of $S$, denoted $\map(S)$, 
is the group $\pi_0(\homeo(S))$ of homotopy classes of homeomorphisms
$f\colon S \to S$.
If $S$ has nonempty boundary $\partial S$,
let $\map(S,\partial S)$ denote the group of homotopy classes 
of homeomorphisms $f\colon S \to S$
where we require all homeomorphisms and homotopies 
to fix the boundary $\partial S$ pointwise.

A classification of infinite-type surfaces was given by
Ker\'ekj\'art\'o \cite{Kerekjarto} and Richards \cite{Richards}.
We recall that each surface $S$ has a \emph{space of ends,}
defined as usual as an inverse limit 
$\varprojlim\pi_0(S\setminus K)$ as $K$ ranges over the compact subsets of $S$.
If we fix a compact exhaustion $K_0 \subset K_1 \subset \dotsb$ of $S$,
an \emph{end} is represented as a sequence
\[U_0 \supset U_1 \supset \dotsb,\]
where each $U_i$ is a connected component of $S \setminus K_i$.
The end is \emph{isolated} if all but finitely many of the $U_i$ have one end,
and \emph{planar} if all but finitely many of the $U_i$ are planar.
These properties do not depend on the choice of compact exhaustion.
Finite-type surfaces have finitely many isolated planar ends.

By a \emph{hyperbolic structure} on a surface $S$,
we mean $S$ equipped with a complete Riemannian metric of constant curvature $-1$
satisfying the following two conditions.
\begin{enumerate*}[label=(\roman*)]
	\item We require that each isolated planar end of $S$
		is modeled on the pseudosphere,
		and following Kerckhoff \cite[Section 4]{Kerckhoff},
	\item we require that each boundary curve is a geodesic with length $1$.
\end{enumerate*}
In the case where $S$ is of finite type,
the condition on ends is satisfied 
by insisting that the metric has finite volume.
We remark that the condition that boundary curves have length $1$
is slightly nonstandard but useful. 
In addition, forcing this condition on our finite-type exhaustion of $S$ places the
hyperbolic structure on $S$ in the component containing the ``thick part'' of Teichm\"uller space.

\section{Finding an Invariant Exhaustion}
Fix $S$ an orientable, connected, infinite-type surface
whose boundary is empty or a union of circles
and fix $G$ a nontrivial finite subgroup of $\map(S)$.
The goal of this section is to prove the following proposition.

\begin{prop}\label{exhaustion} 
	There exists an exhaustion of $S$ by connected, 
	finite-type subsurfaces $\varnothing=S_0 \subset S_1 \subset \dotsb$
	such that for each $k\ge 1$, $S_k$ is $G$-invariant up to homotopy,
	and each component of $S_k \setminus S_{k-1}$ 
	has negative Euler characteristic.
\end{prop}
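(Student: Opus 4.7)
The plan is to build a $G$-invariant-up-to-homotopy exhaustion by symmetrizing an arbitrary compact exhaustion of $S$ under a fixed collection of homeomorphism representatives for $G$, and then to perform a cleanup on the successive shells to force the Euler characteristic condition.

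For the construction, choose for each $g\in G$ a representative homeomorphism $\phi_g\colon S\to S$ with $\phi_e=\mathrm{id}$, and fix a compact exhaustion $K_1\subset K_2\subset\dotsb$ of $S$ by connected, finite-type subsurfaces. For each $n\ge 1$, let
\[L_n:=\bigcup_{g\in G}\phi_g(K_n),\]
a compact subset of $S$, and let $\widetilde S_n$ be a connected, finite-type subsurface of $S$ obtained from a regular neighborhood of $L_n$ by adjoining finitely many arcs joining its components and then filling in all disk complementary regions. Since $\phi_e=\mathrm{id}$ forces $L_n\supset K_n$, the $\widetilde S_n$ exhaust $S$. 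For $G$-invariance up to homotopy, observe that $\phi_h\phi_g$ is isotopic to $\phi_{hg}$ for every $h,g\in G$, so $\phi_h$ permutes the isotopy classes of $\{\phi_g(K_n)\}_{g\in G}$; since the passage from $L_n$ to $\widetilde S_n$ is natural up to isotopy, $\phi_h(\widetilde S_n)$ is isotopic to $\widetilde S_n$.

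Finally, we enforce the Euler characteristic condition. After passing to a subsequence we may assume $\widetilde S_{n-1}$ sits in the interior of $\widetilde S_n$ up to isotopy. A component of $\overline{\widetilde S_n\setminus \widetilde S_{n-1}}$ fails the condition exactly when it is a disk or an annulus. A disk component, together with its (finite) $G$-orbit of disks, can be absorbed into $\widetilde S_{n-1}$ without changing its isotopy class. An annular component that is a collar of a boundary circle of $\widetilde S_{n-1}$ is handled the same way: absorb its $G$-orbit as a $G$-equivariant collaring. The remaining case---an annular component that is a cusp neighborhood of an isolated end of $S$---is the main obstacle. Here we trim $\widetilde S_n$ to remove the cusp and its $G$-orbit, deferring their inclusion to a later stage at which each can be joined to adjacent non-annular topology to form a single complementary component of negative Euler characteristic. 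Arranging this delaying-and-bundling $G$-equivariantly while preserving the exhaustion is the delicate point of the argument.
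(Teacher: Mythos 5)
Your overall strategy---symmetrize a fixed exhaustion under chosen representatives of $G$ and then clean up the shells---matches the paper's, but the argument breaks down at exactly the step you flag as ``delicate,'' and the paper resolves that step in the opposite way from what you propose. The problem is the annular shell component that is a neighborhood of an isolated planar end. If some term $S_{k-1}$ of the exhaustion has a boundary curve $c$ cutting off a once-punctured disk $U$ in $S$, then for \emph{every} later term $S_k$ the component of $S_k\setminus S_{k-1}$ adjacent to $c$ is contained in $U$, hence is a disk, an annulus, or a once-punctured disk, and its Euler characteristic can never be negative no matter how long you defer. So ``trim the cusp off $\widetilde S_n$ and include it later, bundled with adjacent non-annular topology'' can only work if every earlier term also avoids enough of the topology adjacent to that end---equivariantly, and while still exhausting $S$; you correctly identify this as the hard point and then do not carry it out. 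The paper does the opposite: it \emph{enlarges} each term by adjoining all complementary components containing no essential simple closed curve (in particular all once-punctured disks), so the $S_k$ are finite-type but noncompact, every complementary component of every $S_k$ contains an essential curve, and neighborhoods of isolated planar ends never appear as shell components at all. Combined with an isotopy collapsing genuine annular shell components (parallel boundary curves), this disposes of all the exceptional cases at once.

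A secondary gap: your $G$-invariance argument asserts that because $\phi_h$ permutes the isotopy classes of the $\phi_g(K_n)$, it preserves the isotopy class of the filled-in regular neighborhood of their union. That does not follow as stated: a union of subsurfaces is not determined up to isotopy by the isotopy classes of the pieces unless they are first put in some minimal position, so the claimed ``naturality up to isotopy'' of the passage from $L_n$ to $\widetilde S_n$ needs justification. The paper's version (it takes $\bigcap_{f\in X} f(K_{i_n})$ after an explicit preliminary isotopy making the pieces pairwise transverse) has a similar flavor but at least records the needed normalization. This point is repairable with a sentence or two; the cusp issue, as written, is not.
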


This proposition is crucial to our proof of \Cref{maintheorem}. 

\begin{proof} First, fix an arbitrary exhaustion $\{K_i\}$ of $S$ 
	by connected, finite-type subsurfaces, e.g.
	coming from a pants decomposition of $S$.
	Let $X$ be a subset of $\diff(S)$ 
	containing a single representative from each mapping class in $G$. 
	We will proceed by induction 
	on the length of a chain of subsurfaces $S_0\subset S_1\subset \dotsb \subset S_n$. 
	For the base case, set $S_0 = \varnothing$.

	Since $S$ is of infinite type,
	there exists some essential, simple closed curve $\alpha$ on $S\setminus S_n$.
	Recall that a simple closed curve is \emph{essential}
	if it does not bound a disk 
	nor is homotopic to a boundary curve or a puncture. 
	Moreover, there exists some term $K_{i_n}$ of the exhaustion containing 
	both the $X$-orbit of $\alpha$ and $S_n$, since $X$ is finite. 
	To the end of creating a $G$-invariant exhaustion, consider
	\[
		K = \bigcap_{f\in X} f(K_{i_n}).
	\]
	Note that $K$ contains the $X$-orbit of $\alpha$ as well as $S_n$. 
	By performing a preliminary isotopy,
	we may assume the $f(K_{i_n})$ pairwise intersect transversely,
	and thus $K$ is itself a subsurface. 
	Since $f(K) = K$ for each $f\in X$ by construction,
	we may take $S_{n+1}$ to be $K$, 
	together with the finitely many components of $S\setminus K$ 
	that contain no essential, simple closed curve.
	If any component of $S_{n+1} \setminus S_n$ is an annulus,
	by induction we may assume that the component of $S\setminus S_n$
	it determines contains an essential simple closed curve.
	Thus there is no loss in replacing $S_{n+1}$ by an isotopic subsurface
	so that no component of $S_{n+1}\setminus S_n$ is an annulus.
	
	Choosing the essential simple loop $\alpha$ appropriately at each step, 
	the inductive construction above gives an exhaustion of $S$.

\end{proof}

\section{Nielsen Realization}
We continue with the notation from the previous section.
Choose a hyperbolic structure on $S$ so that each
$S_i$ has finite volume and each component of $\partial S_i$
is a geodesic with length $1$.
Write $P_i = \overline{S_i \setminus S_{i-1}}$.
Since each $S_i$ is $G$-invariant up to isotopy, so is each $P_i$. 
Each $g\in G$ gives a well-defined mapping class in $\map(P_i)$ 
by choosing an arbitrary homotopy between $P_i$ and $x_g(P_i)$,
where $x_g$ is a diffeomorphism on $S$ representing $g$.
Thus each $g\in G$ defines a mapping class $\rho(g)$ in $\map(P_i)$.
The inclusion $G\to \map(P_i)$ is actually injective and follows from
the proof below (see e.g. Proposition \ref{prop: torsion}). 
Our proof does not depend on this fact.


\begin{proof}[Proof of \Cref{maintheorem}]
According to Kerckhoff, 
there is another hyperbolic structure on each $P_i$
with respect to which the image of $G$ in $\map(P_i)$ may be realized as a group of isometries
\cite[Theorem 5, discussion following Theorem 4]{Kerckhoff}.
Note that each boundary curve of each $P_i$ remains geodesic with length $1$.

Let us say a few words about the case where $P_i$ is disconnected.
Let $C$ be a component of $P_i$, 
and write $H$ for the stabilizer of $C$ in $G$.
By the above, there is a hyperbolic structure on $C$
with an action of $H$ by isometries.
Choose a set of coset representatives for $G/H$.
If $C'$ is a component of the orbit of $C$ distinct from $C$ choose a representative in $X$ taking $C'$ to $C$,
and give $C'$ a new hyperbolic structure by pulling back
the metric from $C$.
Proceeding orbit by orbit, this  yields a realization of $G$ on $P_i$.

We now have a hyperbolic structure for each $P_i$ 
with an action of $G$ by isometries.
Gluing up the $P_i$ via the identifications coming from $S$
yields a hyperbolic structure on $S$,
but we need to do so respecting the action of $G$.
As above, we glue the boundary curves shared by $P_i$ and $S_{i-1}$ orbit by orbit.
It suffices to show that for each boundary curve $c$ in $P_i \cap S_{i-1}$
there is a gluing that identifies 
the two circle actions on $c$ by its stabilizer $\stab(c)$.
Note that each $g\in\stab(c)$ acts on $c$ by rigid rotations, 
and with the (opposite) orientations induced from $P_i$ and $S_{i-1}$,
it suffices to show that the angles of rotations for the two actions differ by a negative sign. 
Indeed, on the $P_i$ (resp. $S_{i-1}$) side, the angle is the rotation number of the $g$ action
on the circle of geodesic rays on $P_i$ (resp. $S_i$) starting from and perpendicular to $c$.
This action is similar to the one on the conical circle (see e.g. \cite{Bavard_Walker_2}),
and only depends on the mapping class $g$. Thus the two angles differ by a negative sign
due to the opposite orientations.

As a result, we have an isometric action of $G$ on each $P_i$
that respects the gluing,
yielding a hyperbolic structure and an isometric action of $G$ on $S$.
\end{proof}

\section{Classification of torsion elements}\label{sec: classify}
If $S$ is a surface of finite genus and empty boundary, 
then by the classification theorem \cite{Richards} it is realized as $\Sigma- E$, 
where $\Sigma$ is the closed surface with the same genus as $S$ and $E$ is a totally disconnected closed subset of $\Sigma$
homeomorphic to the space of ends. In this case, Theorem~\ref{maintheorem} 
implies that any finite subgroup $G$ of 
$\map(S)$ is realized by some $G$-action on $\Sigma$ by homeomorphisms preserving $E$. This is because 
$\homeo^+(\Sigma-E)\cong \homeo^+(\Sigma,E)$, where the latter denotes orientation-preserving
homeomorphisms of $\Sigma$ preserving $E$.

In particular, one can use Theorem \ref{maintheorem} to classify torsion elements. Here we focus on an
example, the case of $S=\R^2-K$, where $K$ is a Cantor set. In this situation, the mapping class group
acts faithfully on the \emph{conical circle} $S^1_C$ consisting of geodesics (for a fixed complete
hyperbolic metric on $S$) emanating from $\infty$; see e.g. \cite{Bavard_Walker_1, Calegari_Chen}. Thus each mapping
class $g$ has a rotation number, which can be read off from its action on a special subset of $S^1_C$,
namely the \emph{short rays}, which are proper simple geodesics connecting $\infty$ to some point in the Cantor set.

\begin{thm}\label{thm: classify torsion elements}
	Let $S=\R^2-K$, where $K$ is a Cantor set. 
	Finite order elements of $\map(S)$ fix at most one point in $K$. 
	For each $n\ge 2$, elements in $\map(S)$ of order $n$ fall 
	into $2\varphi(n)$ conjugacy classes, which are distinguished by the rotation number and whether the 
	element fixes exactly one point in $K$ or none. Here $\varphi(n)$ is the number of positive integers up 
	to $n$ that are coprime to $n$.
\end{thm}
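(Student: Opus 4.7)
The strategy is to use \Cref{maintheorem} to realize an order-$n$ element by a genuine finite-order homeomorphism, extend this to $S^2$ via the one-point compactification, and then invoke the classical theorem of Ker\'ekj\'art\'o that every nontrivial finite-order orientation-preserving self-homeomorphism of $S^2$ is topologically conjugate to a rotation. The final step, showing that the listed invariants exactly distinguish conjugacy classes, is the main obstacle and will require an equivariant argument in the quotient orbifold.

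Let $g \in \map(S)$ have order $n \ge 2$. Applying \Cref{maintheorem} to $\langle g \rangle$ produces a finite-order homeomorphism $\phi$ realizing $g$. The end $\infty$ is the unique isolated end of $S = \R^2 - K$ and is therefore preserved by any mapping class. Via the isomorphism $\homeo^+(S^2 - (K \cup \{\infty\})) \cong \homeo^+(S^2, K \cup \{\infty\})$ recalled at the start of \Cref{sec: classify}, $\phi$ extends to an orientation-preserving self-homeomorphism of $S^2$ of order $n$ still fixing $\infty$ and preserving $K$. Ker\'ekj\'art\'o's theorem gives a topological conjugacy to the rotation by $2\pi k/n$ with $\gcd(k,n) = 1$, whose only fixed points on $S^2$ are $\infty$ and some $p \in \R^2$. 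Thus $\phi$ fixes at most one point of $K$, and fixes exactly one precisely when $p \in K$. The induced action on the conical circle (geodesic rays from $\infty$) is itself the rotation by $2\pi k/n$, so the rotation number of $g$ equals $k/n$.

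Both the rotation number and the fixed-point type are invariant under conjugation in $\map(S)$, yielding at most $2\varphi(n)$ classes. Realization of each class is by direct construction: for each $k$ coprime to $n$ and each fixed-point type, choose a Cantor set $K \subset \R^2$ invariant under rotation by $2\pi k/n$ that does, respectively does not, contain the origin; the rotation then descends to the desired mapping class on $\R^2 - K$.

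The remaining task, and the main obstacle, is to show that any two elements $g_1, g_2$ of order $n$ with matching invariants are conjugate in $\map(S)$. After replacing each by a Ker\'ekj\'art\'o representative, one may assume both are realized by the same rotation $R$ of $S^2$ (fixing $\infty$) acting on two possibly different $R$-invariant Cantor sets $K_1, K_2 \subset \R^2$; it then suffices to produce an $R$-equivariant self-homeomorphism of $S^2$ fixing $\infty$ and carrying $K_1$ to $K_2$. The plan is to descend to the quotient orbifold $\mathcal{O} = S^2 / \langle R \rangle$, topologically an $S^2$ with two cone points at the images of the rotation fixed points; the fixed-point hypothesis guarantees that $K_1/R$ and $K_2/R$ either both contain or both avoid the non-$\infty$ cone point. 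Using the standard ambient equivalence of Cantor sets in $S^2$, refined to preserve a finite set of marked points, I would construct an orientation-preserving self-homeomorphism $\bar h$ of $\mathcal{O}$ fixing both cone points and mapping $K_1/R$ to $K_2/R$. Since $\bar h$ is orientation-preserving and preserves the branching data, it induces the identity on the deck group $\Z/n$ of the orbifold cover, so it admits an $R$-equivariant lift to $S^2$ which extends over the branch points via the local model $z \mapsto z^n$; this lift is the sought conjugator.
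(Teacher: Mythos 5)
Your proposal is correct and follows the same overall architecture as the paper's proof: realize the order-$n$ class via \Cref{maintheorem}, extend to $S^2$, conjugate to a rigid rotation to get the two invariants and the $2\varphi(n)$ count, and, for the converse, pass to the quotient sphere, carry one Cantor-set image to the other by a homeomorphism fixing the two branch points, and lift. The one place where you genuinely diverge is the step you rightly flag as the main obstacle: showing the lift $\tilde h$ intertwines $\tilde g_1$ with $\tilde g_2$ itself rather than with some other generator $\tilde g_2^{\,j}$ of the deck group. The paper settles this dynamically: it chooses a short ray $r$ through a generic orbit, notes that $\tilde h$ carries the family $\{\tilde g_1^j r\}$ to $\{\tilde g_2^j \tilde h(r)\}$ preserving circular order on the conical circle, and uses equality of rotation numbers to force the induced permutation to be trivial. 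You instead argue by covering-space theory: an orientation-preserving homeomorphism of the quotient sphere fixing both branch points has local degree $+1$ at each, hence acts as $+1$ on $\pi_1$ of the twice-punctured quotient ($\cong\Z$) and therefore trivially on the deck group $\Z/n$, so the lift commutes with $R$. This is valid and arguably cleaner than the short-ray argument, but note that it consumes the rotation-number hypothesis earlier, at the normalization of both elements to the \emph{same} rotation $R$; you should justify explicitly that the rotation number of the mapping class on the conical circle equals $k/n$ for the rigid rotation by $2\pi k/n$ (e.g.\ by choosing the hyperbolic metric $R$-invariant), since that identification is what makes the normalization legitimate and makes rotation number a conjugacy invariant distinguishing the classes. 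The remaining ingredients you leave to standard facts --- that $K_i/R$ is again a Cantor set and that Cantor sets in $S^2$ are ambiently equivalent by a homeomorphism prescribed on finitely many marked points --- are asserted at the same level of detail in the paper, so no gap is introduced there.
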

\begin{proof}
	Let $g\in\map(S)$ be an element of order $n$. Then the action of $g$ on the conical circle $S^1_C$ has rotation
	number $m/n\mod \Z$ for some $m$ coprime to $n$. 
	By Theorem \ref{maintheorem}, we can realize $g$
	as some $\tilde{g}\in \homeo^+(S^2,K\cup\{\infty\})$ of order $n$. It is known that any finite order 
	homeomorphism on $S^2$ is conjugate to a rigid rotation \cite{Zimmermann} and the quotient $S^2/\tilde{g}$ 
	is still homeomorphic to $S^2$. Considering the rotation number, we conclude that $\tilde{g}$ is conjugate 
	to a rigid rotation by $2m\pi/n$, and there is exactly one fixed point $p\in S^2$ other than $\infty$.
	
	We can put a Cantor set on $S^2$ invariant under a rigid rotation on $S^2$ by an angle of $2m\pi/n$
	fixing $\infty$ and $p$ for any $1\le m\le n$ coprime to $n$. We may or may not include the fixed point $p$ 
	in the Cantor set. Apparently this gives $2\varphi(n)$ different conjugacy classes 
	in $\map(S)$ by looking at the rotation number and whether the fixed point $p$ lies in the Cantor set.
	
	Conversely, suppose we have two homeomorphisms $\tilde{g}_i$ on $S^2$ as above 
	fixing $\infty$ and $p_i\neq\infty$
	such that either both $p_i\in K$ or $p_i\notin K$, $i=1,2$. Suppose further that they have the same 
	rotation number $m/n\mod \Z$. Let $q_i:S^2\to S^2/\tilde{g}_i$ be the quotient map. 
	Then $q_i(K)$ is still a Cantor set. There is a homeomorphism 
	$h:S^2/\tilde{g}_1\to S^2/\tilde{g}_2$ taking $q_1(K)$ to $q_2(K)$ and $q_1(\infty)$ to $q_2(\infty)$.
	Moreover, in the case $p_i\in K_i$, 
	we can choose $h$ so that $h(q_1(p_1))=q_2(p_2)$. Then the map
	\[h\circ q_1 : S^2\setminus\{\infty,p_1\}\to (S^2/\tilde{g}_2)\setminus\{q_2(\infty),q_2(p_2)\}\] 
	lifts to $S^2\setminus\{\infty,p_2\}$, which extends uniquely to a map $\tilde{h}: S^2\to S^2$. The map 
	$\tilde{h}$ preserves the Cantor set $K$, satisfies $\tilde{h}(\infty)=\infty$, $\tilde{h}(p_1)=p_2$, 
	and fits into the following commutative diagram.
	\[
	\begin{CD}
	S^2 @>{\tilde{h}}>> S^2\\
	@V{q_1}VV @V{q_2}VV\\
	S^2/\tilde{g}_1 @>{h}>> S^2/\tilde{g}_2
	\end{CD}
	\]
	
	For any $x_0\in S^2\setminus\{\infty,p_1\}$, let $x_j=\tilde{g}_1^j x_0$ for $0\le j\le n-1$. Fix a short ray 
	$r$ that passes through $x_0$ but not any $x_j$ for $j\neq 0$ such that $\{\tilde{g}_1^j r\}_{j=1}^{n-1}$ 
	are disjoint (except at $\infty$). Such a ray can obtained for instance by lifting a short ray on 
	$S^2/\tilde{g}_1$. Then there is a permutation $\sigma$ on $\{0,1,\ldots,n-1\}$ such that 
	$\tilde{h}(\tilde{g}_1^j x)=\tilde{g}_2^{\sigma(j)} \tilde{h}(x)$ for all $x$ on $r$ and all 
	$0\le j\le n-1$. Since $\tilde{g}_1$ and $\tilde{g}_2$ have the same rotation number and $\tilde{h}$ 
	maps $\{\tilde{g}_1^j r\}_{j=1}^{n-1}$ to $\{\tilde{g}_2^j \tilde{h}(r)\}_{j=1}^{n-1}$ preserving 
	their circular order on the conical circle $S^1_C$, we must have $\sigma=1$ and 
	$\tilde{h}\tilde{g}_1(x_0)=\tilde{g}_2\tilde{h}(x_0)$. Since $x_0$ is arbitrary, we conclude that $g_1$ and 
	$g_2$ are conjugate by the image of $\tilde{h}$ in $\map(S)$.
\end{proof}

\section{Using Torsion to Obstruct Embeddings}
If $S$ is an orientable, infinite-type surface,
$\map(S)$ has a natural nontrivial topology called
the \emph{permutation topology} which agrees with the quotient topology
inherited from $\homeo(S)$.
Let $\mathscr{C}(S)$ denote the set of isotopy classes
of essential, simple closed curves in $S$.
The set $\mathscr{C}(S)$ is countable, and
it follows from work of Hern\'andez Hern\'andez--Morales--Valdez
\cite{HernandezHernandezMoralesValdez}
that the action of $\map(S)$ on $\mathscr{C}(S)$ is faithful.
A neighborhood basis of the identity in $\map(S)$
is given by the sets
\[ \bigcap_{c\in C}\stab(c), \]
where $C$ ranges over the finite subsets of $\mathscr{C}(S)$.
The action of $\map(S)$ on $\mathscr{C}(S)$ exhibits
$\map(S)$ as a closed subgroup of $\sym$, the group of bijections
of a countable set, again given the permutation topology
\cite[Corollary 6]{VlamisNotes}.
A natural question to ask is whether
$\sym$ embeds in any big mapping class group.
The main result of this section is that there is no embedding.

\begin{thm}\label{torsion}
If $G$ is a topological group containing a sequence of
nontrivial torsion elements limiting to the identity,
then $G$ does not embed (as a topological group)
in $\map(S)$, for $S$ an
orientable surface whose boundary is empty or a union of circles.
\end{thm}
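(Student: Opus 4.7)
The plan is to argue by contradiction: suppose $\{g_n\}_{n \ge 1}$ is a sequence of nontrivial torsion elements of $\map(S)$ with $g_n \to 1$ in the permutation topology. Applying \Cref{maintheorem} to each cyclic subgroup $\langle g_n\rangle$, each $g_n$ is represented by a finite-order homeomorphism $\phi_n \in \homeo(S)$, and the goal is to conclude that $\phi_n = \mathrm{id}_S$ for $n$ large, contradicting the nontriviality of $g_n$. The key rigidity fact I will use is that a nontrivial finite-order homeomorphism of a connected surface cannot fix any nonempty open subset pointwise: its fixed set is $0$-dimensional in the orientation-preserving case and at most $1$-dimensional in the orientation-reversing case. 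Thus it suffices to exhibit a nonempty open subsurface of $S$ that $\phi_n$ fixes pointwise, for $n$ sufficiently large.

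To produce such a subsurface, I would fix a reference subsurface $\Sigma \subset S$ of the homeomorphism type of a four-holed sphere, which exists in any infinite-type $S$ (for instance, as the union of two adjacent pairs of pants in any pants decomposition). The subgroup $\mathrm{PMod}(\Sigma) \subset \map(\Sigma)$ of mapping classes fixing each boundary component setwise is isomorphic to the free group $F_2$, and in particular torsion-free. Since $g_n \to 1$ and $[\Sigma]$ is determined by the isotopy classes of its four boundary curves, for $n$ large $g_n$ fixes each boundary curve of $\Sigma$ individually, so the restriction $\rho(g_n) \in \map(\Sigma)$ lies in $\mathrm{PMod}(\Sigma)$. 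As $\rho(g_n)$ is torsion (of order dividing that of $g_n$) and lies in the torsion-free group $F_2$, we conclude $\rho(g_n) = 1$. Equivalently, choosing $\phi_n$ to setwise preserve $\Sigma$, the restriction $\phi_n|_\Sigma$ is a finite-order self-homeomorphism of $\Sigma$ isotopic to $\mathrm{id}_\Sigma$ in $\homeo(\Sigma)$. Finite-type Nielsen realization rigidity (\Cref{Kerckhoffthm} applied to $\langle \phi_n|_\Sigma\rangle$) upgrades this to $\phi_n|_\Sigma = \mathrm{id}_\Sigma$: a finite-order self-homeomorphism of a finite-type hyperbolic surface isotopic to the identity must itself be the identity. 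Combined with the rigidity fact above, this forces $\phi_n = \mathrm{id}_S$.

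The main obstacle is ensuring that $\phi_n$ can be taken to literally preserve $\Sigma$ setwise while remaining of finite order. The vanishing $\rho(g_n) = 1$ only provides an isotopy of $\phi_n|_\Sigma$ to $\mathrm{id}_\Sigma$ within $\homeo(\Sigma)$, and naively extending this isotopy to $S$ may destroy the finite-orderness of $\phi_n$. To resolve this I would apply \Cref{exhaustion} to $\langle g_n\rangle$ to produce a $\langle g_n\rangle$-invariant finite-type subsurface of $S$ whose isotopy class contains (an enlargement of) $\Sigma$, and then invoke \Cref{maintheorem} for $\langle g_n\rangle$ relative to a hyperbolic structure adapted to this invariant subsurface. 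The resulting finite-order isometric representative of $g_n$ literally preserves $\Sigma$, allowing the preceding argument to go through and yielding the desired contradiction.
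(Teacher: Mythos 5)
Your argument is correct in outline and shares the paper's basic strategy: both proofs exhibit an open neighborhood of the identity in $\map(S)$ --- an intersection of stabilizers of the boundary curves of a small finite-type subsurface --- and use \Cref{maintheorem} to realize any torsion element of that neighborhood as a finite-order isometry, concluding that the neighborhood is torsion-free. Where you differ is in the key lemma. The paper (\Cref{prop: torsion}) uses a \emph{pair of pants} and argues purely geometrically: an isometry fixing each of the three boundary geodesics setwise restricts to the identity on the pair of pants (a finite-order orientation-preserving homeomorphism of $S^2$ is conjugate to a rotation, hence cannot preserve three disjoint disks), and an isometry of a connected surface fixing an open set is the identity. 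You use a \emph{four-holed sphere} $\Sigma$ and route through the algebraic fact that the pure mapping class group of $\Sigma$ (boundary preserved setwise) is $F_2$, hence torsion-free, then upgrade ``$\phi_n|_\Sigma$ isotopic to the identity'' to ``$\phi_n|_\Sigma=\mathrm{id}$'' via the injectivity of $\mathrm{Isom}\to\map$ for finite-type hyperbolic surfaces, and finish the same way. Both routes work; the paper's needs only elementary hyperbolic geometry, while yours imports $\mathrm{PMod}(S_{0,4})\cong F_2$ and the isotopy-rigidity of finite-order isometries (the latter is not really \Cref{Kerckhoffthm}, which you cite for it). Two spots need tightening: the restriction $\rho$ from the stabilizer of $[\Sigma]$ to $\map(\Sigma)$ must be checked to be a well-defined homomorphism --- the ambiguity is by boundary twists, which vanish when the boundary is only preserved setwise; this is exactly the discussion opening the paper's Section 2 --- so that $\rho(g_n)$ really is torsion. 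And the ``main obstacle'' you flag largely evaporates: once $g_n$ is realized as an isometry fixing the isotopy classes of the four boundary curves, it automatically preserves their geodesic representatives and hence the geodesic four-holed sphere they cobound, so no specially adapted invariant exhaustion is required.
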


Examples of such groups $G$ include $\sym$,
$\out(\pi_1(S))$ when $S$ is of infinite type, 
the automorphism group of a rooted tree, and $\operatorname{Homeo}(S^1)$. 
\Cref{torsion} follows from the following proposition,
which exhibits an open neighborhood of the identity
in $\map(S)$ that contains no nontrivial torsion elements.

\begin{prop}\label{prop: torsion}
Let $\gamma_1$, $\gamma_2$, $\gamma_3$ be essential, 
simple closed curves that cobound a pair of pants.
Any finite order element of
\[\stab [\gamma_1]\cap\stab [\gamma_2]\cap \stab [\gamma_3]\]
in the action of $\map(S)$ on $\mathscr{C}(S)$ is the identity.
\end{prop}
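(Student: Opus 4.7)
The plan is to apply \Cref{maintheorem} to the finite cyclic group $\langle g\rangle$ and to argue that the resulting isometric action must act trivially on the pair of pants cobounded by the geodesic representatives of $\gamma_1,\gamma_2,\gamma_3$, so that rigidity of isometries forces $g$ itself to be the identity.

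I would first apply \Cref{maintheorem} to obtain a hyperbolic metric on $S$ in which $g$ is realized as an isometry. Each isotopy class $[\gamma_i]$ has a unique simple closed geodesic representative $\gamma_i^*$; since $g$ is an isometry fixing $[\gamma_i]$, uniqueness gives $g(\gamma_i^*)=\gamma_i^*$ setwise. The three $\gamma_i^*$ are pairwise disjoint (geodesic representatives of disjoint curves) and cobound a hyperbolic pair of pants $P^*$ isotopic to the original pair of pants. Because $S$ is connected and of infinite type, the three curves cannot cobound a second pair of pants in $S$ (else $S$ would be a closed genus-two surface), so $P^*$ is the unique component of $S\setminus(\gamma_1^*\cup\gamma_2^*\cup\gamma_3^*)$ adjacent to all three $\gamma_i^*$, and hence $g$ preserves $P^*$ setwise.

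The key observation is then the following rigidity fact: the only orientation-preserving isometry of a hyperbolic pair of pants that fixes each of its three boundary geodesics setwise is the identity. Indeed, a hyperbolic pair of pants is rigid given its three boundary lengths, and its orientation-preserving isometry group injects into the symmetric group acting by permutation on the three boundary components (in our case the symmetric pair of pants, this isometry group is the cyclic group of order three, generated by a $2\pi/3$ rotation permuting the boundaries cyclically). In particular, the subgroup fixing each boundary setwise is trivial. Applying this to $g|_{P^*}$ gives $g|_{P^*}=\mathrm{id}$, and a finite-order isometry of the connected hyperbolic surface $S$ fixing the nonempty open set $P^*$ pointwise must be the identity on all of $S$ by isometric rigidity.

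The step I expect to require the most care is justifying that $g$ preserves $P^*$ setwise rather than permuting it with a different component; the infinite-type hypothesis on $S$ is exactly what rules out the closed genus-two alternative. A secondary obstacle, should $\map(S)$ be interpreted to include orientation-reversing elements, is the orientation-reversing case: one notes that $g^2$ is orientation-preserving and hence trivial by the argument above, so $g$ has order at most two, and a short local analysis of how an orientation-reversing involution can act on each $\gamma_i^*$ and the two sides thereof (together with $g(P^*)=P^*$) then rules out any remaining nontrivial possibility.
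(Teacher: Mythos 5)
Your proposal follows essentially the same route as the paper: realize the finite-order class as an isometry via \Cref{maintheorem}, use uniqueness of geodesic representatives to see that the isometry preserves each $\gamma_i^*$ and hence the pair of pants $P^*$ setwise, and conclude by rigidity of the pair of pants that the isometry is the identity on $P^*$ and therefore on all of $S$. For orientation-preserving classes this is correct and matches the paper's (terser) argument; your explicit justification that $g$ cannot send $P^*$ to a second pair of pants on the other side of the three geodesics is a detail the paper leaves implicit.

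The one substantive problem is your orientation-reversing addendum. It is not true that a ``short local analysis rules out any remaining nontrivial possibility'': the reflection of a hyperbolic pair of pants through its three seams fixes each boundary geodesic setwise (acting on each as a reflection with two fixed points), and it extends to a genuine orientation-reversing involution of suitable infinite-type surfaces --- glue to each $\gamma_i$ an infinite-type piece carrying a compatible reflection. Such an involution is a nontrivial finite-order element of $\stab[\gamma_1]\cap\stab[\gamma_2]\cap\stab[\gamma_3]$, so this case cannot be argued away; the statement requires restricting to orientation-preserving mapping classes, an assumption the paper's proof also makes silently when it asserts that fixing the three boundary geodesics setwise forces the identity on $P$. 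Two smaller slips that do not affect your argument: the orientation-preserving isometry group of the \emph{symmetric} pair of pants is $S_3$ rather than $\mathbb{Z}/3$ (the $\pi$-rotation fixing one boundary component and swapping the other two is orientation-preserving), and the pair of pants arising here need not be symmetric, since only boundary curves of $S$ and of the exhausting subsurfaces are normalized to have length $1$; all you actually need is that the orientation-preserving isometry group of any hyperbolic pair of pants injects into the permutation group of its boundary components, which does hold.
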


\begin{proof}
Suppose $f\in \map(S)$ has finite order, and that $f$
preserves the isotopy class of each $\gamma_i$
as in the statement. By \Cref{maintheorem},
there is a hyperbolic metric on $S$ such that $f$
may be represented by an isometry $\varphi\colon S \to S$.
The isometry $\varphi$ restricts to an isometry
of the pair of pants $P$ 
bounded by the geodesic representatives
of the curves $\gamma_i$. Since $f$ fixes the isotopy
class of each $\gamma_i$, the isometry $\varphi$ must
fix each geodesic representative setwise.
But this implies $\varphi$ restricts to the identity on $P$,
from which it follows that $\varphi\colon S \to S$
is the identity.
\end{proof}

\section{Compact Subgroups are Finite}
The purpose of this section is to prove the following theorem.

\begin{thm}\label{compactisfinite}
	Let $S$ be an orientable surface whose boundary is either empty
	or a union of circles.
	Compact subgroups of $\map(S)$ are finite,
	and locally compact subgroups are discrete.
\end{thm}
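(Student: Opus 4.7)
The plan is first to prove that compact subgroups of $\map(S)$ are finite, and then to deduce the locally compact statement via van Dantzig's theorem.

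Let $K \leq \map(S)$ be a compact subgroup. Since $\map(S)$ embeds as a closed subgroup of the totally disconnected group $\sym$, $K$ is compact, totally disconnected, and Hausdorff, i.e., profinite. Continuity of the action on the discrete space $\mathscr{C}(S)$ forces every $K$-orbit to be finite, and the same holds for isotopy classes of finite-type subsurfaces since each such class is determined by its boundary multicurve. Now choose three essential simple closed curves $\gamma_1, \gamma_2, \gamma_3$ cobounding a pair of pants $P$ in $S$, and set $K_P := K \cap \bigcap_{i=1}^{3}\stab[\gamma_i]$. Each $\stab[\gamma_i]$ is open in $\map(S)$, so $K_P$ is open of finite index in $K$. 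By Proposition \ref{prop: torsion}, every torsion element of $K_P$ is trivial, so it suffices to show $K_P = \{1\}$.

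The heart of the argument is to produce an exhaustion $\Sigma_1 \subset \Sigma_2 \subset \cdots$ of $S$ by connected finite-type subsurfaces of negative Euler characteristic, each containing $P$ and $K$-invariant up to isotopy. This adapts Proposition \ref{exhaustion}: starting with any exhaustion by finite-type subsurfaces containing $P$, replace each term by a regular neighborhood of the union of a set of representatives of its (finite) $K$-orbit, and fill in components of the complement that contain no essential simple closed curves. For each $n$, the restriction $K_P \to \map(\Sigma_n)$ is then well-defined. Since $\Sigma_n$ is finite-type, $\map(\Sigma_n)$ is discrete in the permutation topology, so the compact image of $K_P$ is finite. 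Each element of the image fixes $[\gamma_1], [\gamma_2], [\gamma_3]$ in $\mathscr{C}(\Sigma_n)$, and applying the argument of Proposition \ref{prop: torsion} inside $\map(\Sigma_n)$ via the original (finite-type) Kerckhoff theorem forces the image to be trivial. Hence every $f \in K_P$ admits a representative isotopic to the identity on $\Sigma_n$, so $f$ fixes every isotopy class of simple closed curve lying in $\Sigma_n$. Since $\bigcup_n \Sigma_n = S$, $f$ fixes every isotopy class in $\mathscr{C}(S)$, and faithfulness of the action yields $f = 1$.

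For the locally compact statement, let $H$ be a locally compact subgroup of $\map(S)$. As a subgroup of the totally disconnected group $\map(S)$, $H$ is itself totally disconnected, so by van Dantzig's theorem it contains a compact open subgroup $K$. By the first part $K$ is finite, hence $\{1\}$ is open in $K$ and therefore in $H$, proving $H$ is discrete. I expect the main obstacle to be the construction of a $K$-invariant exhaustion by finite-type subsurfaces when $K$ is compact but possibly infinite: the argument in Proposition \ref{exhaustion} should adapt because $K$-orbits on finite-type subsurfaces remain finite, but realizing the union of an orbit as a single finite-type subsurface, and arranging the needed transversality, requires care.
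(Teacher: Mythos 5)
Your overall strategy is sound, and your reduction of the locally compact case to the compact case via van Dantzig is exactly the paper's. For the compact case, however, you take a genuinely different route. The paper proves two separate lemmas: that a compact subgroup $K$ is virtually torsion-free (profiniteness plus the torsion-free open neighborhood of \Cref{prop: torsion}), and that every element of $K$ has finite order (a more delicate argument using the Alexander method, a $\langle g\rangle$-invariant exhaustion, and a uniform bound on the orders of the restrictions); finiteness then follows because a torsion-free finite-index subgroup all of whose elements are torsion is trivial. You instead kill the finite-index open subgroup $K_P=K\cap\bigcap_i\stab[\gamma_i]$ outright, by restricting to a $K$-invariant exhaustion by connected finite-type pieces containing the pair of pants and running the argument of \Cref{prop: torsion} with Kerckhoff's finite-type theorem on each piece. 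This is arguably cleaner: it bypasses the paper's second lemma entirely and never needs to know that individual elements of $K$ are torsion. What it costs you is the need for a $K$-invariant exhaustion when $K$ may be infinite, which the paper never has to construct (its exhaustions are invariant only under a finite or cyclic group).

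Two points need shoring up, the first of which you flagged yourself. A regular neighborhood of a union of orbit representatives of a subsurface is not canonically determined by the set of isotopy classes in the orbit, so $K$-invariance of your $\Sigma_n$ up to isotopy is not automatic. The fix is to work with curves: take a finite filling collection of curves for the $n$-th term of an arbitrary exhaustion, take its $K$-orbit in $\mathscr{C}(S)$ (finite, by compactness and discreteness of $\mathscr{C}(S)$), realize all classes as geodesics for a fixed hyperbolic metric, and let $\Sigma_n$ be a regular neighborhood of their union with the inessential complementary components filled in; this subsurface is canonical up to isotopy given the $K$-invariant set of classes, hence $K$-invariant up to isotopy. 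Second, your assertion that the image of $K_P$ in the discrete group $\map(\Sigma_n)$ is finite presupposes that the restriction homomorphism is continuous, i.e., that its kernel is open in $K_P$; this is exactly where the Alexander method (which the paper invokes in its second lemma) enters: elements of $\map(S)$ fixing the isotopy classes of a suitable finite collection of curves in $\Sigma_n$ together with its boundary curves restrict to the identity of $\map(\Sigma_n)$. With these repairs your argument goes through.
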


Of course, if $S$ is a finite-type surface,
the permutation topology on $\map(S)$ is discrete,
and there is nothing to prove. 
So suppose $S$ is of infinite type.
The permutation topology on $\sym$ is Hausdorff and totally disconnected
\cite[p. 136]{Cameron}, properties which pass to subspaces like $\map(S)$.
A theorem of van Dantzig \cite[Theorem 1.3]{Wesolek}
says that totally disconnected, locally compact groups admit
a neighborhood basis of the identity given by compact, open subgroups.
Finite Hausdorff spaces are discrete,
thus \Cref{compactisfinite} reduces to showing that
compact subgroups of $\map(S)$ are finite.

Let $K \le \map(S)$ be a compact subgroup.
We will show that $K$ is virtually torsion-free
and that every element of $K$ has finite order.
Only the trivial subgroup of such a group is torsion-free,
thus it follows that $K$ itself is finite.

Therefore, the proof of \Cref{compactisfinite} reduces
to the following two lemmas.

\begin{lem}
	If $S$ is an orientable, infinite-type surface,
	compact subgroups of $\map(S)$ are virtually torsion-free.
\end{lem}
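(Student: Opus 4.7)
The plan is to leverage \Cref{prop: torsion} directly: it supplies an open neighborhood of the identity in $\map(S)$ consisting, apart from the identity itself, of infinite-order mapping classes. I will show that for any compact subgroup $K\le\map(S)$, the intersection of $K$ with this neighborhood is a finite-index, torsion-free subgroup of $K$, which is exactly what virtual torsion-freeness asks for.

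First, I would fix three essential simple closed curves $\gamma_1,\gamma_2,\gamma_3$ on $S$ that cobound an embedded pair of pants. Since $S$ is of infinite type, such curves always exist: one can take the three boundary components of any pair of pants in a pants decomposition of $S$, or construct them directly inside a suitable finite-type subsurface with sufficient topology. Then set
\[ U = \stab[\gamma_1]\cap\stab[\gamma_2]\cap\stab[\gamma_3]. \]
Each $\stab[\gamma_i]$ is a subgroup and a basic open neighborhood of the identity in the permutation topology, so $U$ is an open subgroup of $\map(S)$. By \Cref{prop: torsion}, every finite-order element of $U$ is trivial, so $U$ is torsion-free.

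Now let $K\le\map(S)$ be compact. Then $K\cap U$ is an open subgroup of $K$, so the cosets of $K\cap U$ form an open cover of $K$; by compactness, only finitely many cosets occur, so $[K:K\cap U]<\infty$. Since $K\cap U\subseteq U$ is torsion-free, $K$ itself is virtually torsion-free.

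The only point requiring any care is the existence of the triple of curves $\gamma_i$ in complete generality, since one must ensure that in every orientable infinite-type $S$ (with or without boundary) such an embedded pair of pants with three essential boundary curves can be found; this is standard from the classification of infinite-type surfaces. Beyond that verification, the argument reduces cleanly to \Cref{prop: torsion} together with the elementary fact that an open subgroup of a compact group has finite index, so I do not expect any genuine obstacle.
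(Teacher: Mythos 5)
Your proof is correct and takes essentially the same approach as the paper: both arguments intersect $K$ with the torsion-free neighborhood of the identity supplied by \Cref{prop: torsion} and show the intersection has finite index in $K$. The only difference is that you obtain finite index by the elementary coset-covering argument, using that the intersection of stabilizers is itself an open \emph{subgroup}, whereas the paper routes this step through the profiniteness of compact, Hausdorff, totally disconnected groups; your version is slightly more economical and equally valid.
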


\begin{proof}
	It is well-known that compact Hausdorff, totally disconnected groups
	are isomorphic as topological groups to \emph{profinite} groups,
	inverse limits of finite groups.
	Open subgroups of profinite groups are of finite index.
	In fact, if $G = \varprojlim_{i\in I} G_i$ is an inverse limit
	of the inverse system of finite groups $G_i$, 
	then an open neighborhood basis of the identity in $G$
	is given by the kernels of the maps $G \to G_i$ as $i$ varies.

	Thus if $K$ is a compact subgroup of $\map(S)$,
	the intersection $K \cap U$ of $K$ with $U\subset \map(S)$
	any open neighborhood of the identity
	contains a finite-index subgroup of $K$.
	Examples of torsion-free neighborhoods of the identity were
	constructed in \Cref{prop: torsion},
	establishing the lemma.
\end{proof}

\begin{lem}
	If $S$ is an orientable, infinite-type surface
	and $K$ is a compact subgroup of $\map(S)$,
	every element of $K$ has finite order.
\end{lem}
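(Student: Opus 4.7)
The plan is to argue by contradiction: assume some $f\in K$ has infinite order and set $L=\overline{\langle f\rangle}$, a closed (hence compact) abelian subgroup of $K$. The idea is to upgrade \Cref{maintheorem} so that it applies to $L$ rather than to a finite subgroup, realizing $L$ as a group of isometries of some hyperbolic metric on $S$, and then to conclude that $L$ must nevertheless be finite by invoking the fact that $\operatorname{Isom}(S)$ is a finite-dimensional Lie group.

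For the construction, I would first build an $L$-invariant exhaustion $\varnothing=\Sigma_0\subset\Sigma_1\subset\dotsb$ of $S$ by connected, finite-type subsurfaces. Even though $L$ is not finite, it acts on the countable discrete set $\mathscr{C}(S)$ through open point stabilizers, so every $L$-orbit is finite; this lets the inductive argument of \Cref{exhaustion} go through with a finite set of $L$-orbit representatives playing the role of the set $X$ there. For each $n$, the restriction map $L\to\map(\Sigma_n)$ has finite image $G_n$, being the continuous image of a compact group in a discrete space, and \Cref{Kerckhoffthm} realizes $G_n$ as a group of isometries of a hyperbolic structure on $\Sigma_n$. I would then glue these hyperbolic structures exactly as in the proof of \Cref{maintheorem}: the matching of rotation angles on shared boundary curves depends only on the mapping class of each element of $L$, which is consistent from both sides of any gluing, so the gluing works for every $g\in L$ simultaneously and assembles into a group homomorphism $\rho\colon L\to\operatorname{Isom}(S)$. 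This $\rho$ is injective because its composition with the natural projection $\operatorname{Isom}(S)\to\map(S)$ is the inclusion $L\hookrightarrow\map(S)$, and it is continuous (in the compact-open topology on $\operatorname{Isom}(S)$) because any compact subset of $S$ lies in some $\Sigma_m$, and convergence in $L$ forces eventual agreement in the finite quotient $G_m$.

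To finish, Myers--Steenrod shows that $\operatorname{Isom}(S)$ is a finite-dimensional Lie group. The image $\rho(L)$ is a compact subgroup, and since $\rho$ is a continuous injection from a compact Hausdorff group, its image is homeomorphic to $L$ and in particular totally disconnected. But a compact totally disconnected subgroup of a finite-dimensional Lie group must be $0$-dimensional, hence discrete, and a compact discrete group is finite. This contradicts $f$ having infinite order. The main obstacle is the coherent gluing step: one must verify that the Kerckhoff realizations of the various $G_n$ can be arranged compatibly across different $n$ so as to produce a bona fide continuous homomorphism $L\to\operatorname{Isom}(S)$, rather than merely a family of element-by-element isometric realizations.
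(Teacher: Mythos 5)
Your argument is correct in outline but takes a genuinely different route from the paper, and it rests on an extension of \Cref{maintheorem} from finite to compact subgroups that you sketch rather than carry out. The paper works one element at a time: compactness gives that $K\cap\stab(\gamma)$ contains a finite-index subgroup of $K$ for every curve $\gamma$, so each $g\in K$ acts periodically on every isotopy class of essential simple closed curve; the Alexander method then shows a power of $g$ restricts to the identity on every finite-type subsurface, and the pair-of-pants rigidity of \Cref{prop: torsion}, applied inside each $\map(S_j)$ of a $g$-invariant exhaustion, bounds the orders of the restrictions uniformly, so $g$ itself has finite order --- no realization of the whole group is needed. Your upgrade of the realization theorem does appear to go through, because the proofs of \Cref{exhaustion} and \Cref{maintheorem} use finiteness of $G$ only through (a) finiteness of orbits of curves and subsurfaces, which holds for compact $L$ since curve stabilizers are open and hence of finite index, (b) finiteness of the image of $G$ in each $\map(P_i)$, which holds for compact $L$ since the relevant kernels are open, and (c) the orbit-by-orbit gluing, whose boundary rotation matching depends only on mapping classes and is therefore element-independent, exactly as you observe; you are right that this coherence is the one point demanding real verification, and your injectivity claim for $\rho$ is precisely the assertion that the glued isometries represent the given mapping classes, which must be made explicit. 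Your route is heavier, but it buys more: realizing the entire compact subgroup by isometries proves \Cref{compactisfinite} in one stroke, with no need for the separate virtually-torsion-free lemma. One simplification at the end: you do not need Myers--Steenrod or total disconnectedness, since the isometry group of a complete hyperbolic surface of infinite type is already discrete (the normalizer of a non-elementary Fuchsian group is discrete), and a compact subgroup of a discrete group is finite.
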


\begin{proof}
	As discussed in the proof of the previous lemma,
	if $K$ is a compact subgroup of $\map(S)$
	and $U$ is an open neighborhood of the identity,
	the intersection $K \cap U$ contains a finite-index subgroup of $\map(S)$.
	Let $\gamma$ be the isotopy class of an essential, simple closed curve.
	An example of such an open neighborhood $U$ is given
	by those elements of $\map(S)$ preserving $\gamma$.
	If $K'$ is a finite-index subgroup of $K$,
	every element $g \in K$ has a positive power lying in $K'$.
	Thus as $\gamma$ varies, we see that $g$ acts periodically
	on the isotopy class of every essential simple closed curve in $S$.

	We claim that an element $g$ of $\map(S)$ which acts periodically
	on the isotopy class of every essential simple closed curve in $S$
	has finite order in $\map(S)$.
	
	First suppose $S'$ is a finite-type subsurface of $\map(S)$.
	We will show that a power of $g$ fixes $S'$ up to isotopy
	and induces the identity element of $\map(S')$.
	Indeed, the Alexander method \cite[Proposition 2.8]{Farb_Margalit}
	for finite-type surfaces implies the existence of a finite collection $C$
	of essential, simple closed curves on $S'$ 
	such that any element of $\map(S)$ fixing pointwise each element of $C$
	and fixing the isotopy class of each boundary curve of $S'$ in $S$
	restricts to the identity element of $\map(S')$.
	An element $g$ as above acts periodically on each isotopy class,
	thus a power $g^k$ of $g$ restricts to the identity element of $\map(S')$.

	Thus if $S'$ is a finite-type subsurface of $S$ there exists a positive integer
	$k$ such that $g^k$ preserves $S'$ up to isotopy
	and induces the identity element of $\map(S')$.
	Choose a diffeomorphism $\varphi \colon S \to S$ representing $g$
	such that $\varphi^k|_{S'}$ is the identity. Then
	\[ S'' = \bigcup_{i=1}^k \varphi^i(S') \]
	is a finite-type subsurface of $S$ which is preserved up to isotopy by $g$.
	Indeed, by induction as in \Cref{exhaustion} there is an exhaustion of $S$
	by finite-type subsurfaces $\varnothing = S_0 \subset S_1 \subset \dotsb$
	such that for each $j$, $S_j$ is $g$-invariant up to isotopy,
	and the restriction of $g$ to $\map(S_j)$ has finite order.
	
	The final step is to show that $g$ itself has finite order.
	Let $k_j$ denote the order of $g$ when restricted to $\map(S_j)$.
	The claim follows once we show that the orders $k_j$ are uniformly bounded.
	Indeed, let $S_i$ and $S_j$ be terms in the invariant exhaustion above
	satisfying $S_i \subset S_j$. Suppose further that $S_i$ contains
	three simple closed curves which cobound a pair of pants.
	As in \Cref{prop: torsion},
	observe that any finite order element of $\map(S_j)$ preserving the isotopy class
	of each of these three curves is the identity of $\map(S_j)$.
	Thus $i \le j$ implies the powers $k_j$ and $k_i$ satisfy $k_j \le k_i$.
	Indeed, we conclude the order of $g$ in $\map(S)$ divides $k_i$.
\end{proof}

\bibliography{bib.bib}
\bibliographystyle{alpha}
\end{document}